\newtheorem{theorem}{Theorem}
\newtheorem{lemma}[theorem]{Lemma}
\theoremstyle{definition}
{

}
\begin{document}

\markboth{Aistis Atminas, Sergey Kitaev, Vadim V. Lozin, and Alexandr Valyuzhenich}
{Universal graphs and universal permutations}

\title{Universal graphs and universal permutations}

\author{Aistis Atminas\thanks{DIMAP and Mathematic Institute, University of Warwick, Coventry CV4 7AL, UK. Email: A.Atminas@warwick.ac.uk} \and 
Sergey Kitaev\thanks{Department of Computer and Information Sciences, University of Strathclyde, Glasgow G1 1XH, UK. Email: sergey.kitaev@cis.strath.ac.uk} \and
Vadim V. Lozin\thanks{DIMAP and Mathematic Institute, University of Warwick, Coventry CV4 7AL, UK. Email: V.Lozin@warwick.ac.uk}
\and
Alexandr Valyuzhenich\thanks{Novosibirsk State University, 2 Pirogova Street, 630090 Novosibirsk, Russia. Email: graphkiper@mail.ru}
}

\date{}

\maketitle

\begin{abstract}
Let $X$ be a family of graphs and $X_n$ the set of $n$-vertex graphs in $X$. 
A graph $U^{(n)}$ containing all graphs from $X_n$ as induced subgraphs is called $n$-universal for $X$. 
Moreover, we say that $U^{(n)}$ is a {\it proper} $n$-universal graph for $X$ if it belongs to $X$. 
In the present paper, we construct a proper $n$-universal graph for the class of split permutation graphs.
Our solution includes two ingredients: a proper universal $321$-avoiding permutation and
a bijection between $321$-avoiding permutations and symmetric split permutation graphs.
The $n$-universal split permutation graph constructed in this paper has $4n^3$ vertices, 
which means that this construction is order-optimal.  \\

\noindent
{\bf Keywords:} universal graphs; bipartite permutation graphs; split permutation graphs; $321$-avoiding permutations.
\end{abstract}

\section{Introduction}
Let $X$ be a family of graphs and $X_n$ the set of $n$-vertex graphs in $X$. 
A graph containing all graphs from $X_n$ as induced subgraphs is called $n$-universal for $X$.
The problem of constructing universal graphs is closely related to graph representations and finds applications in theoretical computer
science \cite{Alstrup,implicit}. This problem is trivial if universality is the only requirement, 
since the union of all vertex disjoint graphs from $X_n$ is 
obviously $n$-universal for $X$. However, this construction is generally neither optimal, in terms of the number 
of its vertices, nor proper, in the sense that it does not necessarily belong to $X$. 

Let us denote an $n$-universal graph for $X$ by $U^{(n)}$ and the set of its vertices by $V(U^{(n)})$.
Since the number of $n$-vertex subsets of $V(U^{(n)})$ cannot be smaller than the number of graphs in $X_n$, 
we conclude that 
$$
\log_2 |X_n|\le \log_2\binom{|V(U^{(n)})|}{n}\le n\log_2|V(U^{(n)})|.
$$
Also, trivially, $n\le |V(U^{(n)})|$, and hence, 
$$
n\log_2 n\le n\log_2|V(U^{(n)})|.
$$
We say that $U^{(n)}$ is {\it optimal} if $n\log_2|V(U^{(n)})|=\max(\log_2 |X_n|,n\log_2 n)$, {\it asymptotically optimal} if 
$$
\lim_{n\to \infty}\frac{n\log_2|V(U^{(n)})|}{\max(\log_2 |X_n|,n\log_2 n)}=1,
$$ 
and {\it order-optimal} if there is a constant $c$ such that for all $n\ge 1$,
$$
\frac{n\log_2|V(U^{(n)})|}{\max(\log_2 |X_n|,n\log_2 n)}\le c.
$$ 
Optimal universal graphs (of various degrees of optimality) have been constructed
for many graph classes such as the class of all graphs \cite{Moon}, threshold graphs \cite{threshold},
planar graphs \cite{Chung}, graphs of bounded arboricity \cite{Alstrup}, 
of bounded vertex degree \cite{Butler,ELO08}, split graphs and bipartite graphs \cite{Lozin}, 
bipartite permutation graphs \cite{Gabor}, etc. Some of these constructions can also 
be extended to an infinite universal graph, in which case the question 
of optimality is not relevant any more and the main problem is finding a universal element 
{\it within} the class under consideration. 
We call a universal graph for a class $X$ that belongs to $X$ a {\it proper} universal graph. 
  
Not for every class there exist proper infinite universal graphs with countably many vertices \cite{FK97}.
Such constructions are known for the class of all graphs \cite{Rado}, $K_4$-free graphs and some other classes \cite{uni1,uni2}.
A proper infinite countable universal graph can be also easily constructed for the class of bipartite permutation 
graphs from finite $n$-universal graphs (represented in Figure~\ref{fig1}) by increasing $n$ to infinity.

In the present paper, we study a related class, namely, the class of split permutation graphs.
In spite of its close relationship to bipartite permutation graphs, finding a {\it proper}
$n$-universal graph for this class is not a simple problem even for finite values of $n$.  
We solve this problem  by establishing a bijection between symmetric split permutation graphs 
and $321$-avoiding permutations and by constructing a proper universal $321$-avoiding permutation. 
Our construction uses $4n^3$ vertices. Since there are at most $n!< n^n$ labeled permutation graphs, 
this construction is order-optimal. Whether this construction can be extended to an infinite countable
graph remains a challenging open problem.

The organization of the paper is as follows. In Section~\ref{sec:pre} we introduce all
preliminary information related to the topic of the paper. Then in Section~\ref{sec:per}
we construct a universal $321$-avoiding permutation and in Section~\ref{sec:spg}
we use this construction to build a universal split permutation graph. Finally, 
in Section~\ref{sec:con} we conclude the paper with a number of open problems.

\section{Preliminaries}
\label{sec:pre}
All graphs in this paper are finite, undirected, and without loops or multiple edges. 
We denote the set  of vertices of a graph $G$ by $V(G)$ and the set of its edges by $E(G)$.
Given a vertex $v\in V(G)$, we denote by $N(v)$ the neighbourhood of $v$,
i.e. the set of vertices adjacent to $v$. For a subset $U\subseteq V(G)$, we let
$G[U]$ denote the subgraph of $G$ induced by $U$, i.e. the vertex set of $G[U]$ is $U$ 
with two vertices being adjacent in $G[U]$ if and only if they are adjacent in $G$.

We say that a graph $G$ contains  a graph $H$ as an induced subgraph if $H$ is isomorphic to an induced subgraph of $G$. 
A {\it clique} in a graph is a subset of pairwise adjacent vertices and an {\it independent set} is 
a subset of pairwise non-adjacent vertices.

Three families of graphs are of special interest in this paper. These are bipartite graphs,
split graphs and permutation graphs. A graph $G$ is {\it bipartite} if its vertices can be partitioned into at most two independent
sets, and $G$ is a {\it split graph} if its vertices can be partitioned into an independent set and a clique.
To define the notion of a permutation graph, let us first introduce some terminology related to permutations.


\medskip
A permutation on the set $[n]:=\{1,2,\ldots,n\}$ is a bijection from the set to itself.
A commonly used way of representing a permutation $\pi :[n]\to [n]$ is the one-line notation, 
which is the ordered sequence $\pi(1)\pi(2)\cdots\pi(n)$. A permutation $\pi$ is said to {\it contain} 
a permutation $\rho$ as a pattern if $\pi$ has a subsequence that is order isomorphic to $\rho$. If $\pi$ does not 
contain $\rho$, we say that $\pi$ {\it avoids} $\rho$. 

To ``visualize'' the pattern containment relation, we represent permutations by means of intersection diagrams, 
as illustrated in Figure~\ref{fig:pi}. Clearly, such a diagram uniquely describes the permutation even 
without the labels attached to the endpoints of the line segments. We use the labels only for convenience.
Then a permutation $\pi$ contains a permutation $\rho$ if the (unlabeled) diagram representing $\rho$
can be obtained from the (unlabeled) diagram representing $\pi$ by deleting some  segments.  
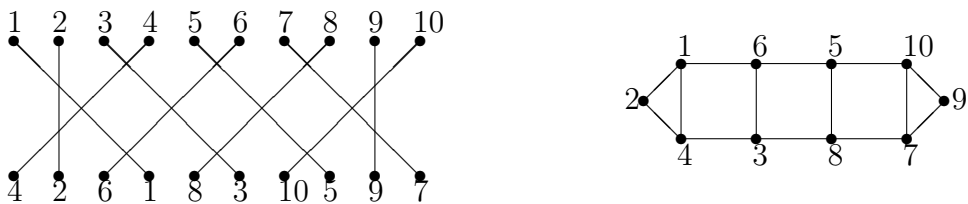
\begin{figure}[ht]
\begin{center}
\begin{picture}(220,90)
\setlength{\unitlength}{0.3mm}
\put(0,0){\circle*{5}} 
\put(20,0){\circle*{5}}
\put(40,0){\circle*{5}} 
\put(60,0){\circle*{5}}
\put(80,0){\circle*{5}} 
\put(100,0){\circle*{5}}
\put(120,0){\circle*{5}} 
\put(140,0){\circle*{5}}
\put(160,0){\circle*{5}} 
\put(180,0){\circle*{5}}

\put(0,60){\circle*{5}} 
\put(20,60){\circle*{5}}
\put(40,60){\circle*{5}} 
\put(60,60){\circle*{5}}
\put(80,60){\circle*{5}} 
\put(100,60){\circle*{5}}
\put(120,60){\circle*{5}} 
\put(140,60){\circle*{5}}
\put(160,60){\circle*{5}} 
\put(180,60){\circle*{5}}

\put(0,0){\line(1,1){60}} 
\put(20,0){\line(0,1){60}}
\put(40,0){\line(1,1){60}} 
\put(60,0){\line(-1,1){60}}
\put(80,0){\line(1,1){60}} 
\put(100,0){\line(-1,1){60}}
\put(120,0){\line(1,1){60}} 
\put(140,0){\line(-1,1){60}}
\put(160,0){\line(0,1){60}} 
\put(180,0){\line(-1,1){60}} 
\put(-3,-11){$4$} 
\put(17,-11){$2$}
\put(-3,64){$1$}
\put(17,64){$2$}
\put(37,-11){$6$} 
\put(57,-11){$1$}
\put(37,64){$3$}
\put(57,64){$4$}
\put(77,-11){$8$} 
\put(97,-11){$3$}
\put(77,64){$5$}
\put(97,64){$6$}

\put(117,-11){$10$} 
\put(137,-11){$5$}
\put(117,64){$7$}
\put(137,64){$8$}
\put(157,-11){$9$} 
\put(177,-11){$7$}
\put(157,64){$9$}
\put(177,64){$10$}
\end{picture}
\begin{picture}(130,70)
\setlength{\unitlength}{0.5mm}
\put(10,20){\circle*{3}} 
\put(20,10){\circle*{3}}
\put(40,10){\circle*{3}} 
\put(60,10){\circle*{3}}
\put(80,10){\circle*{3}} 
\put(90,20){\circle*{3}} 
\put(20,30){\circle*{3}}
\put(40,30){\circle*{3}} 
\put(60,30){\circle*{3}}
\put(80,30){\circle*{3}} 

\put(20,30){\line(1,0){20}} 
\put(20,30){\line(0,-1){20}}
\put(20,30){\line(-1,-1){10}} 
\put(20,10){\line(-1,1){10}}
\put(40,10){\line(1,0){20}} 
\put(40,10){\line(-1,0){20}}
\put(40,10){\line(0,1){20}} 
\put(60,30){\line(-1,0){20}}
\put(60,30){\line(1,0){20}}
\put(60,30){\line(0,-1){20}} 
\put(80,10){\line(-1,0){20}} 
\put(80,10){\line(1,1){10}}
\put(80,10){\line(0,1){20}} 
\put(80,30){\line(1,-1){10}}

\put(19,3){$4$} 
\put(19,32){$1$}
\put(5,18){$2$}
\put(39,3){$3$}
\put(39,32){$6$} 
\put(59,3){$8$}
\put(59,32){$5$}
\put(79,3){$7$}
\put(79,32){$10$} 
\put(92,18){$9$}
\end{picture}
\end{center}
\caption{The diagram representing the permutation $\pi=426183(10)597$ (left) and the permutation graph $G_{\pi}$ (right).} 
\label{fig:pi}
\end{figure}

One more way to better understand the pattern containment relation on permutations is through the notion
of a permutation graph. The permutation graph $G_{\pi}$ of a permutation $\pi$ is the intersection graph of 
the line segments in the diagram representing $\pi$, i.e. the graph whose vertices are the segments with
two vertices being adjacent if and only if the respective segments intersect (cross) each other 
(again, see Figure~\ref{fig:pi} for an illustration). Then a permutation $\pi$ contains a permutation
$\rho$ as a pattern if and only if $G_{\pi}$ contains $G_{\rho}$ as an induced subgraph.

A graph $G$ is said to be a {\it permutation graph} if $G$ is isomorphic to $G_{\pi}$ for some permutation $\pi$.
In this paper, we focus on two particular subclasses of permutation graphs: bipartite permutation and split permutation 
graphs. Bipartite permutation graphs are precisely permutation graphs without a triangle (a clique of size 3), because,
first, bipartite graphs are precisely graphs without odd cycles, and second, no cycle with at least 5 vertices 
is a permutation graph (which can be easily seen). Since a triangle is the permutation graph of the permutation 
$321$, the class of bipartite permutation graphs consists of permutation graphs of $321$-avoiding permutations.
We study these permutations in the next section.

\section{Universal 321-avoiding permutations}
\label{sec:per}

In this section, we study the set of $321$-avoiding permutations, i.e. permutations
containing no $321$ as a pattern. In other words, a permutation is $321$-avoiding
if it contains no subsequence of length 3 in decreasing order reading from left to right,
or equivalently, if its elements can be partitioned into at most 2 increasing subsequences.
For example, the permutation $245136$ avoids the pattern $321$ (with 2456 and 13
being two increasing subsequences), while $261435$ does not because of the subsequence 643. 
It is known that the number of $321$-avoiding permutations of length $n$ is given by $C_n=\frac{1}{n+1}{2n\choose n}$, 
the {\em $n$-th Catalan number}. 
We let $S_n(321)$ denote the set of all $321$-avoiding permutations of length~$n$.  

The set of $321$-avoiding permutations have been studied regularly in the {\it theory of permutation patterns} 
(see \cite{K} for a recent comprehensive introduction to the respective field) in connection with various combinatorial problems. 
In this section, we study these permutations in connection with the notion of a {\it universal permutation}.

Given a set $X$ of permutations, we say that a permutation $\Pi$ is $n$-universal for $X$ if 
it contains all permutations of length $n$ from $X$ as patterns. Moreover, $\Pi$ is a {\it proper} $n$-universal permutation for $X$
if $\Pi$ belongs to $X$. 

Note that it is straightforward to construct a proper $n$-universal permutation for $S_n(321)$ of length 
$nC_n=\frac{n}{n+1}{2n \choose n}$. Indeed, we can list all the $C_n$ permutations in a row, say, 
in a lexicographic order, and create a single permutation by raising the elements of the $i$-th permutation 
from left by $(i-1)n$, for $1\leq i\leq C_n$; the resulting permutation will clearly avoid the pattern $321$. 
For example, for $n=3$, the elements of $S_3(321)$ can be listed as $123$, $132$, $213$, $231$ and $312$ leading to the permutation 
$$123465879(11)(12)(10)(15)(13)(14).$$ 
However, our goal in this section 
is to construct a proper $n$-universal permutation of length $n^2$ for the set of $321$-avoiding permutations of length $n$. To this end, 
we denote by $\rho_n$ the permutation that  begins with $(n+2)1(n+4)2\cdots (3n)n$ followed by 
$(3n+2)(n+1)(3n+4)(n+3)\cdots(5n)(3n-1)$ followed by $(5n+2)(3n+1)(5n+4)(3n+3)\cdots(7n)(5n-1)$ 
followed by $(7n+2)(5n+1)(7n+4)(5n+3)\cdots(9n)(7n-1)$, etc. In general, for $1<i<\lfloor \frac{n}{2}\rfloor$, the $i$-th $2n$-block of the permutation is given by 
$$(2ni-n+2)(2ni-3n+1)(2ni-n+4)(2ni-3n+3)\cdots(2ni+n)(2ni-n-1).$$ 
For $n\geq 3$, in case of even $n$, the last $2n$ elements of the permutation are
$$(n^2-n+1)(n^2-3n+1)(n^2-n+2)(n^2-3n+3)\cdots(n^2)(n^2-n-1)$$ 
while in case of odd $n$, the last $n$ elements of the permutation are $$(n^2-2n+1)(n^2-2n+3)\cdots(n^2-1).$$ 
For small values of $n$, we have the following:
$$
\begin{array}{l|l}
n & \rho_n \\
\hline
1 & 1 \\
\hline
2 &  3142\\
\hline
3 &  517293468\\
\hline
4 & 6182(10)3(12)4(13)5(14)7(15)9(16)(11)\\
\hline
5 & 7192(11)3(13)4(15)5(17)6(19)8(21)(10)(23)(12)(25)(14)(16)(18)(20)(22)(24)\\
\end{array}
$$

It is not difficult to see that, by construction, $\rho_n$ is a $321$-avoiding permutation. We denote the permutation graph of $\rho_n$ by $H_n$. From the intersection diagram of  $\rho_n$ shown in Figure \ref{fig2}, one can see that $H_n$ is the graph presented in Figure \ref{fig1}.

\begin{figure}[ht]
\begin{center}
\includegraphics[scale=0.5]{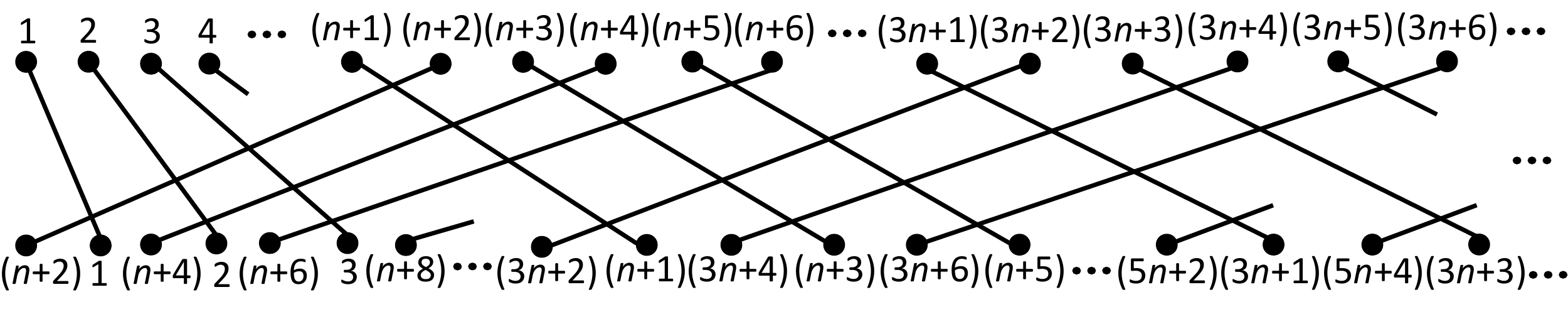}
\end{center}
\vspace{-0.5cm}
\caption{The intersection diagram of $\rho_n$.}\label{fig2} 
\end{figure}

If in a $321$-avoiding permutation of length $n$ the element $n-1$ is to the right of $n$, 
then $n-1$ must be the rightmost element in the permutation to avoid the pattern $321$ 
involving the elements $n-1$ and $n$. Using this observation, we can generate all $321$-avoiding permutations 
of length $n$ from $321$-avoiding permutations of length $n-1$ as follows. 
Let $\pi\in S_{n-1}(321)$. To obtain all permutations in $S_n(321)$ derived from $\pi$, 
we can insert $n$ in $\pi$ in any place to the right of $n-1$ (this cannot lead to an occurrence of the pattern $321$, 
say $nxy$, since $(n-1)xy$ would then be an occurrence of $321$ in $\pi$); 
also, we can replace the element $(n-1)$ in $\pi$ by $n$ and adjoin the element $(n-1)$ to the right of the obtained permutation. 
Clearly, if we apply the described operations to different permutations in $S_{n-1}(321)$, 
we will obtain different permutations in $S_n(321)$. Moreover,  the steps described above are reversible, 
namely for any permutation in $S_n(321)$ we can figure out from which permutation in $S_{n-1}(321)$ it was obtained, 
which gives us the desired.

\begin{figure}[ht]
\begin{center}
\includegraphics[scale=0.5]{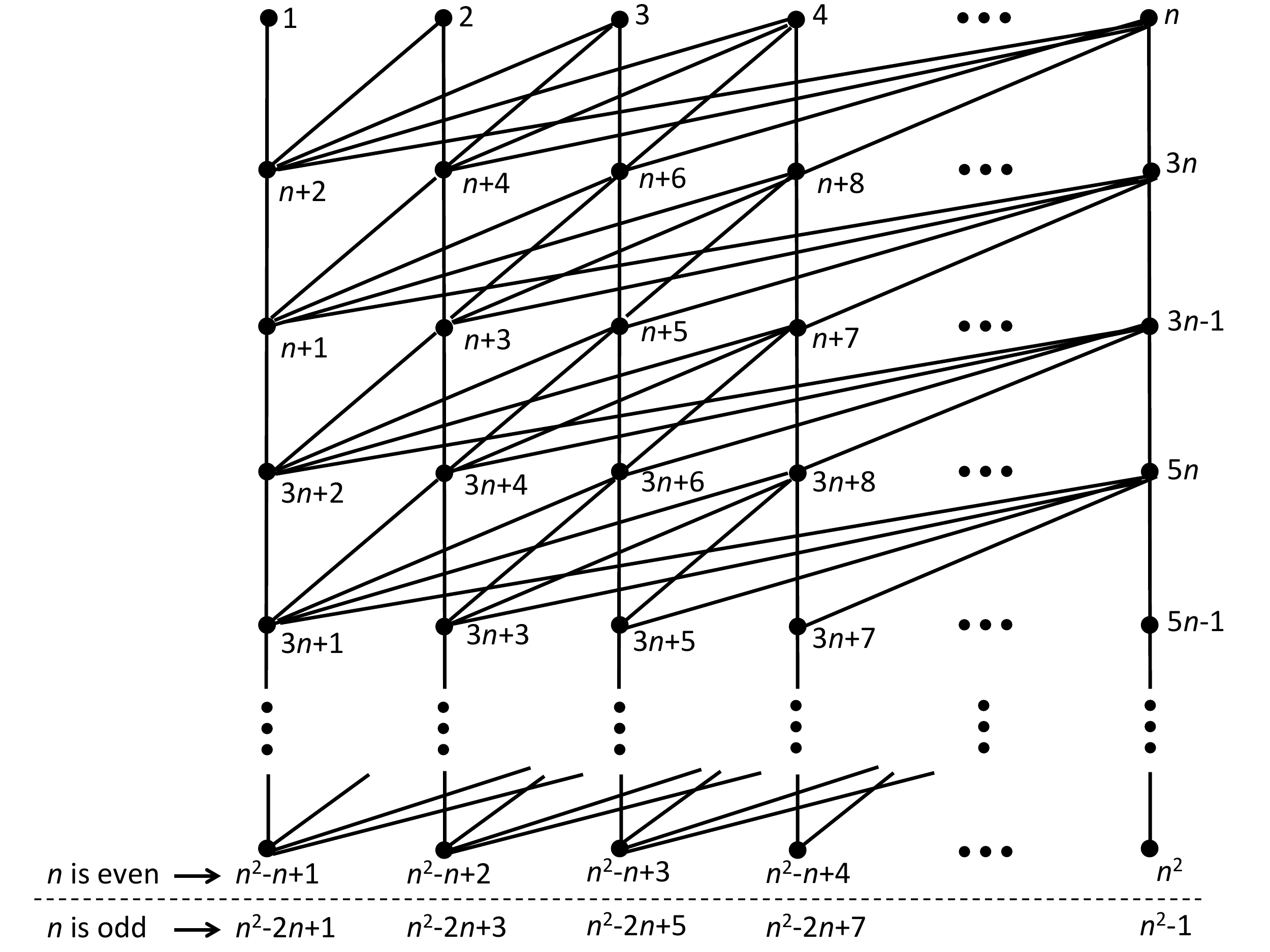}
\end{center}
\vspace{-0.5cm}
\caption{The graph $H_n$.}
\label{fig1} 
\end{figure}

The main result of the section is the following theorem.

\begin{theorem}\label{theorem} 
The permutation  $\rho_n$ is a proper $n$-universal permutation for the set $S_n(321)$. 
\end{theorem}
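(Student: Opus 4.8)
The plan is to show two things: first, that $\rho_n$ belongs to $S_n(321)$ (properness), and second, that $\rho_n$ contains every permutation $\pi \in S_n(321)$ as a pattern (universality). The properness is essentially already noted in the excerpt --- the construction is visibly a union of increasing runs, so I would only briefly verify that $\rho_n$ decomposes into at most two increasing subsequences, which is equivalent to avoiding $321$.

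For universality, I would exploit the inductive generation of $S_n(321)$ described just before the theorem: every $\pi \in S_n(321)$ arises from some $\sigma \in S_{n-1}(321)$ either by inserting $n$ somewhere to the right of the position of $n-1$, or by promoting the old $n-1$ to $n$ and appending $n-1$ at the very end. The natural strategy is to prove by induction on $n$ that $\rho_n$ embeds $\rho_{n-1}$ in a controlled way and that the extra $2n$-block (or $n$-block, depending on parity) of $\rho_n$ provides exactly enough ``room'' to realize both of the two generation operations. Concretely, I would look at the intersection-diagram picture of $\rho_n$ (Figure~\ref{fig2}) and identify, inside it, a copy of the diagram of $\rho_{n-1}$ together with a reservoir of additional segments; the claim is that any subsequence-isomorphic copy of $\sigma$ living in the $\rho_{n-1}$ part can be extended by one segment from the reservoir to a copy of $\pi$, matching whichever of the two operations produced $\pi$.

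The cleaner alternative, which I would actually pursue, is a direct structural argument rather than a full induction. The block structure of $\rho_n$ mirrors the canonical ``staircase'' decomposition of a $321$-avoiding permutation: a $321$-avoiding $\pi$ is determined by its two increasing subsequences together with the interleaving pattern of their one-line positions, and the indices where left-to-right maxima occur. I would make this precise by giving an explicit embedding recipe: map the elements of the ``upper'' increasing subsequence of $\pi$ to the large (odd-position) entries in consecutive blocks of $\rho_n$, and map the ``lower'' increasing subsequence to the small (even-position) entries, choosing the blocks so that the relative horizontal order and the relative vertical order of the chosen segments in $\rho_n$ reproduce those of $\pi$. Because each $2n$-block of $\rho_n$ contributes one ``high'' and one ``low'' value whose ranges shift upward block by block, I expect one can always find $n$ blocks realizing any prescribed interleaving of two increasing sequences of total length $n$ --- this is where the quadratic length $n^2$ gets used.

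The main obstacle will be verifying that the chosen embedding simultaneously respects both the horizontal order (positions) and the vertical order (values) of $\pi$, i.e.\ that picking the right segment from each block never forces a value-order violation given the positions already committed, and vice versa. In the diagram language this is the assertion that the reservoir segments are ``generic enough'' that every pair of increasing sequences can be woven through them; I would discharge it by tracking, for each prefix of $\pi$, an invariant recording which block we have reached and which of the two rows (high/low) is currently available, and showing the invariant can always be advanced. I expect the bookkeeping across the even/odd-$n$ boundary cases (the last block having $2n$ versus $n$ elements) to be the fiddliest part, but it should not affect the core argument.
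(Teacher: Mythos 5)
Your proposal is a plan rather than a proof: in both routes you sketch, the central combinatorial verification is named and then deferred, and in each case that deferred step is exactly where the difficulty lives. In the inductive route (which is the route the paper actually takes), you assert that a realization of $\sigma$ inside the $\rho_{n-1}$ part ``can be extended by one segment from the reservoir'' (the new row and column). But the extension element is often \emph{not} in the new row or column: when $\pi$ is obtained by inserting $n$ immediately before some element $y$ of $\sigma$, the natural choice for $n$ is the bottom neighbour of the element $z$ realizing $y$, which sits in the interior of the diagram; and this choice can collide with the element already playing the role of $n-1$. The paper's proof has to handle this collision by re-routing the whole realization (shifting every chosen element weakly above or weakly to the right of $x$ by one column --- which is precisely why the extra column is needed), and without that shift argument the induction does not close. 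Your sketch gives no mechanism for resolving the collision.

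The ``direct structural'' route you say you would actually pursue has the same problem in a different guise. The statement that the high/low entries of the blocks are ``generic enough that every pair of increasing sequences can be woven through them'' is the theorem restated; the invariant whose advanceability you promise to check is the entire content. It is also not obviously advanceable with the resources available: $\rho_n$ has only about $n/2$ blocks of length $2n$ (not $n$ blocks, as you write), and within a single block every high value exceeds every low value, so a block realizes patterns of shape $3142$ but not $2143$; realizing a pattern such as $2143\,6587\cdots$ forces you to consume a fresh block for each descent pair, which makes the count tight and means the ``weaving'' claim needs a genuine argument, not an expectation. Either route can be completed, but as written neither is.
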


\begin{proof}  Note that if one removes the last column and last row in the graph in Figure \ref{fig1} 
and relabels vertices by letting a vertex receive label $i$ if it is the $i$-th largest label in the original labeling, 
then one gets exactly the graph in question of size $(n-1)\times(n-1)$. 
This observation allows us to apply induction on the size of the graph with obvious base case on 1 vertex 
giving a permutation containing the only 321-avoiding permutation of size 1.

\begin{figure}[ht]
\begin{center}
\includegraphics[scale=0.5]{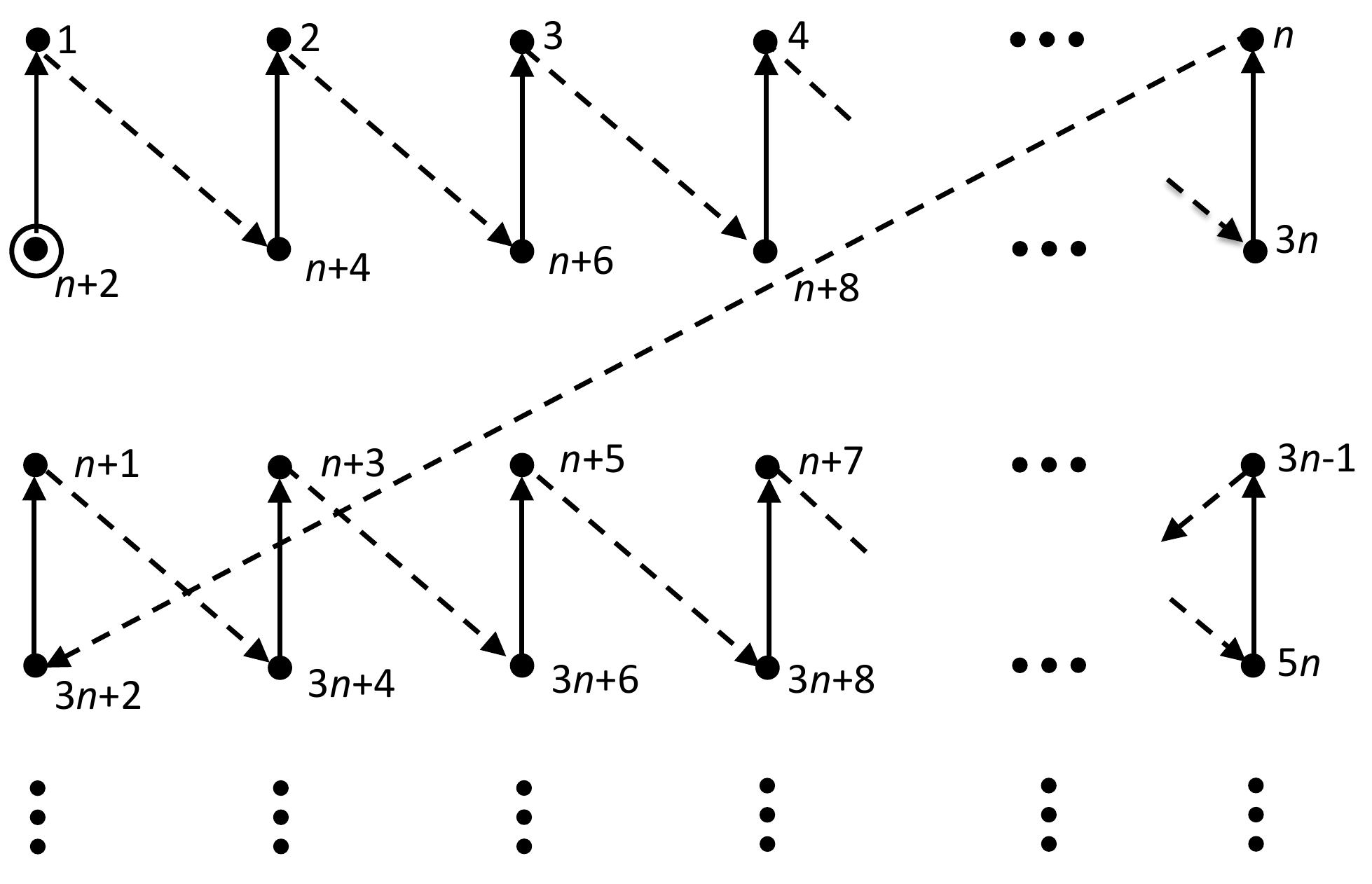}
\end{center}
\vspace{-0.5cm}
\caption{A direct reading off the permutation $\rho_n$ from the graph.}\label{fig3} 
\end{figure}

It is not hard to see based on Figure~\ref{fig2} that a direct way to read off 
the permutation corresponding to the graph in Figure~\ref{fig1} is to start with 
the element $n+2$ and to follow the (solid and dashed) arrows as shown in Figure~\ref{fig3}; 
in the case of odd $n$, the last row only has horizontal arrows (say, solid) going from left 
to right as shown schematically in the left picture in Figure \ref{fig4}. 
We only pay attention to the vertical solid arrows and introduce some terminology here. 
We say that an element is in a {\em top row} if an arrow points at it. 
By definition, all elements in the last row for the case when $n$ is odd are considered to be on a top row. 
Non-top elements are said to be from a {\em bottom row}. 
Thus, each element is either a top or a bottom element depending on which row it lies on. 
Two elements are said to be {\em neighbor elements} if they are connected by a vertical arrow. 
Clearly, for two neighbors  we always have that the bottom neighbor is larger than the top neighbor. 
Also, it is straightforward to see from the structure of the graph that on the same row, elements increase from left to right. 
Finally, a bottom element is always larger than the top element right below it.

\begin{figure}[ht]
\begin{center}
\includegraphics[scale=0.5]{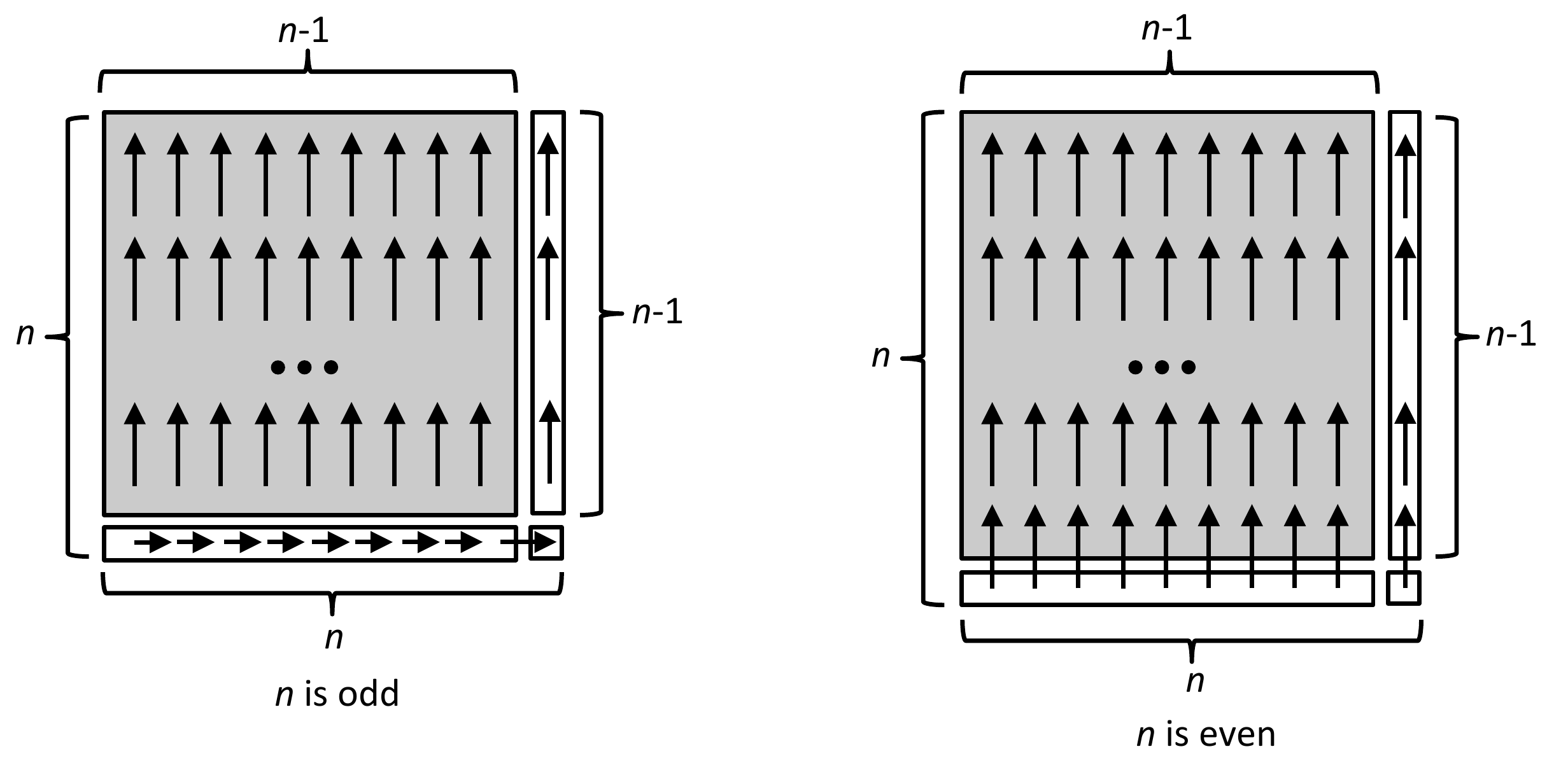}
\end{center}
\vspace{-0.5cm}
\caption{Applying inductive hypothesis.}\label{fig4} 
\end{figure}

Suppose that the statement is true for the case of $n-1$ and we would like to prove it for the case of $n$. 
That is, if in the graph on $n^2$ vertices we remove the last row and the last column, then in the obtained graph, 
shown schematically shaded in Figure \ref{fig4}, we can {\em realize} any $321$-avoiding permutation of length $n-1$. 
By realization of a permutation of length $n-1$ we mean picking $n-1$ elements in the shaded area in Figure \ref{fig4}, 
so that while going through the directed path in Figure \ref{fig3}, the picked elements form the same relative order as 
the elements of the permutation. Our goal is to show how to realize a $321$-avoiding permutation $\pi$ of length $n$ based on 
a realization of the permutation $\sigma$ obtained from $\pi$ either by removing the largest element (when $\pi$ does not end on $n-1$), or by removing $n-1$ from the end of $\pi$ and then replacing $n$ with $n-1$. 
Note that the element $n^2$ is not in the shaded area. We distinguish here between two cases. 

\medskip
{\it Case 1: $n$ is odd}. We split this case into two subcases as follows. 

\medskip
\noindent
{\it Case 1.1: $\pi$ does not end on $n-1$.} That is, in $\pi$, $n-1$ is to the left of $n$. 
If $\sigma$ ends with the largest element, we can always take the element $n^2$ to extend 
the realization of $\sigma$ to a realization of $\pi$. On the other hand, if $n-1$ is not 
the rightmost element in $\sigma$, then clearly the element corresponding to $n-1$, say $x$, 
must be in a bottom row, say, row $i$ ($i$ is even). 
Moreover, since the elements to the right of $n-1$ in $\sigma$ must be in increasing order 
to avoid the pattern $321$, and we have at most $n-2$ such elements, we know that those elements 
are chosen among the top elements in row $i-1$ weakly to the right of $x$ and, if $i+1< n-1$, 
possibly among the top elements in row $i+1$ weakly to the left of $x$. 
In either case, we can always extend the realization of $\sigma$ to a realization of $\pi$ 
by picking a bottom element to the right of $x$ in row $i$ or weakly to the left of $x$ in row $i+2$ (if it exists). 
Indeed, if $n$ is next to the left of an element $y$ in $\pi$, and vertex $z$ corresponds to $y$ 
in the realization of $\sigma$, then we can pick the (bottom) neighbor of $z$ to correspond to 
$n$ and thus to get a realization of $\pi$. For example, if $\pi=24513$ ($n=5$) and the realization of  
$\sigma=2413$ is the sequence 7, 11, 6, 10, that is, $x=11$, $y=1$ and $z=6$, then the neighbor of $z$ 
is $17$ and a desired realization of $\pi$ is 7, 11, 17, 6, 10; see the left picture in Figure~\ref{fig5}.

\begin{figure}[ht]
\begin{center}
\includegraphics[scale=0.5]{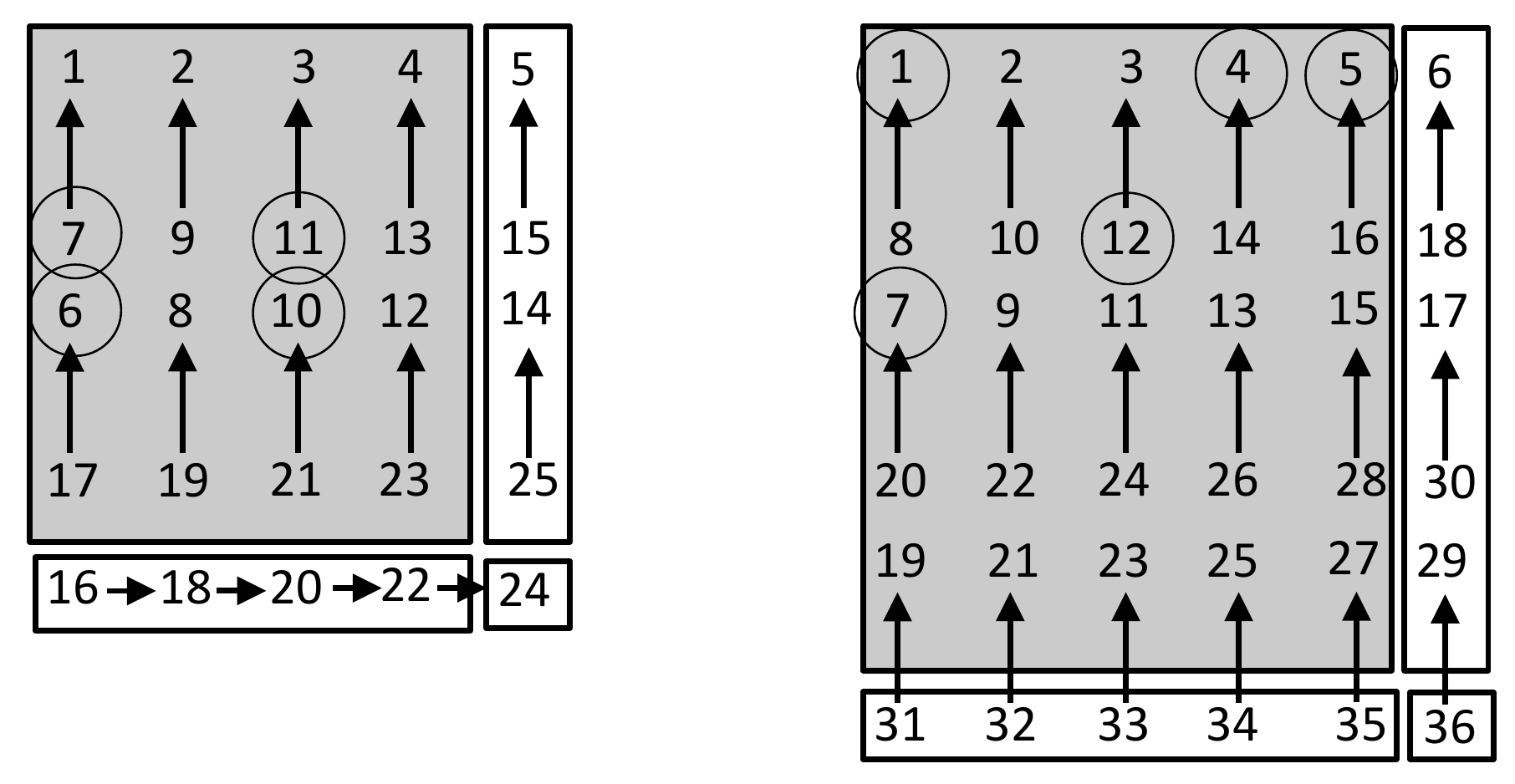}
\end{center}
\vspace{-0.5cm}
\caption{Examples illustrating our inductive proof.}\label{fig5} 
\end{figure}

The only case when the described approach does not work is if the bottom neighbor of $z$ is $x$ itself. In this case we will take a different realization of $\sigma$ obtained by one position shift to the right 
of all the elements already involved in the realization that are located weakly above or weakly to the right 
of $x$ including $x$ itself, and we pick   the element directly below the shifted $z$ to obtain the desired outcome.  See the respective subcase involving shifting in Case 1.2 below for more details on why the shift works and for an example of applying the shift.

\medskip
\noindent
{\it Case 1.2: $\pi$ ends on $n-1$.}  If the element $x$ corresponding to $n-1$ in a realization of $\sigma$ is in a top row, 
$\sigma$ must end with $n-1$ and we can add the (bottom) neighbor of $x$ to the realization of $\sigma$ to get a realization 
of $\pi$ (then the neighbor of $x$ will correspond to $n$, while the other elements in $\pi$ will have the same corresponding 
elements as those in $\sigma$).  On the other hand, if $x$ is in a bottom row, say $i$, then applying the same arguments as above, 
everything to the right of $n-1$ in $\sigma$ can be realized by the elements weakly to the right of $x$ in (top) row $i-1$ and 
weakly to the left of $x$ in (top) row $i+1$ if $i+1< n-1$.  

The worst case for us here is if the element directly below $x$ is involved in the realization of $\sigma$. 
In this case we will take a different realization of $\sigma$ obtained by one position shift to the right 
of all the elements already involved in the realization that are located weakly above or weakly to the right 
of $x$ including $x$ itself (that is, we shift  the realization elements in the first quadrant, including 
the positive semi-axes, with the origin at $x$). Such a shift is always possible because we have the $n$-th column adjoined. 
Also, such a shift does not change any order of elements involved in the initial realization of $\sigma$ 
because there are no elements of the realization in the fourth quadrant and the positive $x$-axis with the origin at $x$. 
For example, if $n=5$, $x=11$ and 10 and, say 3 and 4 would be in a realization of $\sigma$, after the shift, 
we would have 10, $x=13$, 4 and 5 in the realization of $\sigma$ instead of 10, 11, 3 and 4, respectively 
(see the left picture in Figure~\ref{fig5} for layout of elements when $n=5$; the circles there should be ignored for the moment). 

In either case, we can  pick the element directly below $x$ to play the role of $n-1$, 
while $x$ will be playing the role of $n$ in the obtained realization of $\pi$. 
For example, if $\pi=25134$ and  a realization of  $\sigma=2413$ is the sequence 7, 11, 6, 10 
as in our previous example ($x=11$ here), then we can consider instead the realization of $\sigma$ 
7, 13, 6, 10 and 7, 13, 6, 10, 12 is a desired realization of $\pi$; see the left picture in Figure~\ref{fig5}.

\medskip
{\it Case 2: $n$ is even}. 

\medskip
In this case, we actually can repeat verbatim all the arguments from the case ``$n$ is odd" keeping in mind that
\begin{itemize}
\item by definition, the last row is a top row when considering the odd $n-1$, and
\item picking the bottom neighbor element (in the case of even $n$) may involve the last row, which does not change anything.
\end{itemize} 

Of course, different examples should be provided in this case. 
For the subcase  ``$\pi$ does not end on $n-1$'', if $\pi=152634$ ($n=6$) and 
the realization of  $\sigma=15234$ is the sequence 1, 12, 4, 5, 7, that is, $x=12$, $y=3$ and $z=5$, 
then the neighbor of $z$ is $16$ and the desired realization of $\pi$ is 1, 12, 4, 16, 5, 7; 
see the right picture in Figure~\ref{fig5}. Finally, for the subcase ``$\pi$ ends on $n-1$'', 
if $\pi=162345$ and  a realization of  $\sigma=15234$ is the sequence 1, 12, 4, 5, 7 ($x=12$), 
then 1, 12, 4, 5, 7, 11 is a desired realization of $\pi$; see the right picture in Figure \ref{fig5}.
\end{proof}

By direct inspection, it is not difficult to verify that the permutation graph of $\rho_n$, $H_n$, 
is precisely the $n$-universal graph for the class of bipartite permutation graphs
described in \cite{Gabor}. Therefore, Theorem \ref{theorem} strengthens the main result of \cite{Gabor} by 
extending it from graphs to permutations (in particular, Theorem \ref{theorem} implies Theorem 6 in \cite{Gabor}).

\section{Universal split permutation graphs}
\label{sec:spg}

In this section, we use the result of Section~\ref{sec:per} to construct a universal split permutation graph.
To this end, let us first introduce some more terminology.

A {\it vicinal quasi-order} $\sqsubseteq$ on the vertex set of a graph is defined as: 
\[	x\sqsubseteq y\mbox{ if and only if }N(x)\subseteq N(y)\cup \{y\}.
\]

We call a set of vertices which are pairwise incomparable with respect to this relation a {\it vicinal antichain},
and a set of vertices which are pairwise comparable a {\it vicinal chain}.  

The {\it Dilworth number} of a graph is the maximum size of a vicinal antichain in
the graph, or equivalently, the minimum size of a partition of its vertices into
vicinal chains.

Split graphs of Dilworth number 1 are precisely {\it threshold graphs} \cite{CH77}.
In other words, a graph is threshold if and only if its vertices form a vicinal chain.
Let us emphasize that threshold graphs are split graphs, i.e. the vertices of a threshold graph
can be partitioned into a clique and an independent set. Moreover, it is not difficult to see that
\begin{itemize}
\item[(0)] if in a split graph with a clique $C$ and an independent set $I$ the vertices of $C$ or 
the vertices of $I$ form a vicinal chain, then the set of all vertices of the graph form a vicinal chain.
\end{itemize}

Universal threshold graphs have been constructed in \cite{threshold}. An $n$-universal
graph in this class contains $2n$ vertices of which $n$ vertices form a clique $C=(c_1,\ldots,c_n)$, 
$n$ vertices form an independent set $I=(i_1,\ldots,i_n)$ and for each $j$, $N(i_j)=\{c_1,\ldots,c_j\}$.
An example of an $n$-universal threshold graph is represented in Figure~\ref{fig:H5}

\begin{figure}[ht]
\begin{center}
\begin{picture}(300,60)
\put(50,0){\circle*{5}}
\put(100,0){\circle*{5}}
\put(150,0){\circle*{5}}
\put(200,0){\circle*{5}}
\put(250,0){\circle*{5}}
\put(48,53){$c_1$}
\put(98,53){$c_2$}
\put(148,53){$c_3$}
\put(198,53){$c_4$}
\put(248,53){$c_5$}


\put(150,52){\oval(230,20)}

\put(48,-12){$i_1$}
\put(98,-12){$i_2$}
\put(148,-12){$i_3$}
\put(198,-12){$i_4$}
\put(248,-12){$i_5$}
\put(50,50){\circle*{5}}
\put(100,50){\circle*{5}}
\put(150,50){\circle*{5}}
\put(200,50){\circle*{5}}
\put(250,50){\circle*{5}}

\put(50,0){\line(0,1){50}}
\put(250,0){\line(-1,1){50}}
\put(250,0){\line(-2,1){100}}
\put(250,0){\line(-3,1){150}}
\put(250,0){\line(-4,1){200}}
\put(100,0){\line(0,1){50}}
\put(200,0){\line(-1,1){50}}
\put(200,0){\line(-2,1){100}}
\put(200,0){\line(-3,1){150}}
\put(150,0){\line(0,1){50}}
\put(150,0){\line(-1,1){50}}
\put(150,0){\line(-2,1){100}}
\put(200,0){\line(0,1){50}}
\put(100,0){\line(-1,1){50}}
\put(250,0){\line(0,1){50}}

\end{picture}
\end{center}
\caption{Universal threshold graph for $n=5$ (the oval represents a clique).}
\label{fig:H5}
\end{figure}
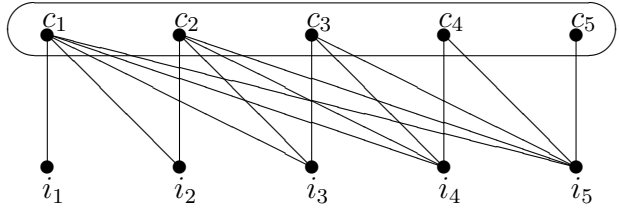

Split graphs of Dilworth number at most 2 are precisely split permutation graphs \cite{BHde85}.
In other words, taking into account (0), we can say that a split graph $G=(C,I,E)$ with a clique $C$ and an independent set $I$
is a permutation graph if and only if
\begin{itemize}
\item[(1)] the vertices of $C$ can be partitioned into at most two sets $C^1$ and $C^2$ so that
both $G[C^1\cup I]$ and $G[C^2\cup I]$ are threshold graphs and  
\item[(2)] the vertices of $I$ can be partitioned into at most two sets $I^1$ and $I^2$ so that
both $G[C\cup I^1]$ and $G[C\cup I^2]$ are threshold graphs.
\end{itemize}

We say that a split permutation graph $G=(C,I,E)$ is {\it symmetric} if it admits 
a partition $C=C^1\cup C^2$ and $I=I^1\cup I^2$ such that 
\begin{itemize}
\item[(3)] both $G[C^1\cup I^1]$ and    
$G[C^2\cup I^2]$ are universal threshold graphs. 
\end{itemize}

\begin{lemma}\label{lem:2n}
Every split permutation graph with $n$ vertices is contained in a symmetric split permutation 
graph with $2n$ vertices as an induced subgraph.
\end{lemma}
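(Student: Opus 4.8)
The plan is to show that any split permutation graph $G=(C,I,E)$ can be embedded into a symmetric one on twice as many vertices by ``completing'' the threshold decompositions guaranteed by conditions (1) and (2) into universal threshold graphs. First I would fix a partition of $G$ witnessing split permutation structure: by (1) write $C=C^1\cup C^2$ so that $G[C^1\cup I]$ and $G[C^2\cup I]$ are threshold, and by (2) write $I=I^1\cup I^2$ so that $G[C\cup I^1]$ and $G[C\cup I^2]$ are threshold. The key observation is that in a threshold graph the vertices form a single vicinal chain, so within each of $C^1$, $C^2$, $I^1$, $I^2$ the relevant neighbourhoods are nested. The goal is to match up $C^1$ with $I^1$ and $C^2$ with $I^2$ so that $G[C^1\cup I^1]$ and $G[C^2\cup I^2]$ each become induced subgraphs of universal threshold graphs on $2n$ vertices.

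Next I would recall the explicit structure of the universal threshold graph from the excerpt: it has a clique $(c_1,\dots,c_n)$ and an independent set $(i_1,\dots,i_n)$ with $N(i_j)=\{c_1,\dots,c_j\}$, and it contains every $n$-vertex threshold graph as an induced subgraph. The plan is to take two disjoint copies of this universal threshold graph, one providing a clique $C^1$ together with an independent set $I^1$, the other providing $C^2$ and $I^2$, so that by construction (3) holds and the resulting graph is symmetric. The total number of vertices is $2\cdot 2n$ worth of ``slots'', but the point is to embed $G$'s actual $n$ vertices into a symmetric split graph of size $2n$: for each vertex of $C$ and each vertex of $I$ we allot one mirror vertex, so $G$ on $n$ vertices sits inside a graph on $2n$ vertices. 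Concretely, I would take the vertex set of the symmetric graph to be $C^1\cup C^2\cup I^1\cup I^2$ together with added mirror vertices completing each of $G[C^1\cup I^1]$ and $G[C^2\cup I^2]$ to the $\lceil n/\,\rceil$-sized universal threshold pattern, arranging the totals to reach exactly $2n$.

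The heart of the argument is the embedding step: I must verify that the cross-edges between $C^1$ and $I^2$, and between $C^2$ and $I^1$, are consistent with placing $G$ as an induced subgraph of the proposed symmetric graph, and that adding the mirror vertices does not create or destroy any edge of $G$. Here I would use the nestedness coming from the threshold conditions (1) and (2) simultaneously: the quasi-order $\sqsubseteq$ restricted to $C$ refines into at most two chains, and likewise for $I$, and these two families of chains can be aligned because $G$ has Dilworth number at most $2$ by the result of \cite{BHde85}. I expect the main obstacle to be precisely this simultaneous alignment --- showing that the clique-partition from (1) and the independent-set partition from (2) can be chosen compatibly so that completing each diagonal block to a universal threshold graph leaves the off-diagonal adjacencies of $G$ intact. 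Once the alignment is established, the counting ($n$ original vertices plus at most $n$ mirror vertices, for $2n$ total) and the verification of property (3) are routine, and the embedding of $G$ as an induced subgraph follows by restricting the symmetric graph back to the $n$ original vertices.
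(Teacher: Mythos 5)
Your construction is the same as the paper's up to the point where the real work begins, and that is exactly where your proposal stops. Like the paper, you take a partition $C=C^1\cup C^2$, $I=I^1\cup I^2$ from conditions (1) and (2), pair the diagonal blocks $C^1$ with $I^1$ and $C^2$ with $I^2$, and complete each $G[C^j\cup I^j]$ (an $n_j$-vertex threshold graph, $n_j=|C^j|+|I^j|$) to a universal threshold graph on $2n_j$ vertices, giving $2n$ vertices in total and property (3) for free. But you never say what the edges between $C^1_*$ and $I^2_*$, and between $C^2_*$ and $I^1_*$, are in the enlarged graph. You correctly flag this as ``the main obstacle,'' yet your only gesture toward resolving it is that the chains ``can be aligned because $G$ has Dilworth number at most $2$'' --- which is just a restatement of the hypothesis that $G$ is a split permutation graph and does not produce the required adjacencies. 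Without defining those off-diagonal edges you cannot verify that the enlarged graph satisfies (1) and (2) (i.e., is a split permutation graph at all --- note that ``symmetric split permutation graph'' requires (1) and (2) in addition to (3)), nor that $G$ survives as an induced subgraph.

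The paper's missing step is a saturation trick: keep the original edges of $G$ between old vertices, and then, for every edge $i^1_kc^2_j$ present, add all missing edges $i^1_kc^2_s$ with $s<j$ and $c^2_ji^1_s$ with $s>k$ (and symmetrically for $I^2_*$ versus $C^1_*$). This forces $I^1_*\cup C^2_*$ and $I^2_*\cup C^1_*$ to be vicinal chains, restoring (1) and (2). The point that makes it work --- and that your proposal would need to supply --- is that because the old vertices of each of these sets already form vicinal chains in $G$ (by the threshold conditions (1) and (2) applied to $G$), and because the indexing inside each universal threshold block can be chosen to refine the vicinal order of the old vertices with respect to all of $G$, the saturation never adds an edge between two old vertices; hence $G$ remains induced. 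As written, your argument has a genuine gap at precisely this step.
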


\begin{proof}
Let $G=(C,I,E)$ be a split permutation graph with $n$ vertices and with  
a partition $C=C^1\cup C^2$ and $I=I^1\cup I^2$ satisfying (1) and (2). 
For $j=1,2$, we denote $n_j=|C^j|+|I^j|$. 
By adding new vertices to the graph $G[C^1\cup I^1]$ we can extend it to a universal threshold graph with 
$2n_1$ vertices $C_*^1=\{c_1^1,c_2^1,\ldots,c_{n_1}^1\}$, $I_*^1=\{i_1^1,i_2^1,\ldots,i_{n_1}^1\}$ 
so that $N(i_j^1)=\{c_1^1,\ldots,c_j^1\}$ for each $j$. 
Similarly, by adding new vertices to the graph $G[C^2\cup I^2]$ 
we can extend it to a universal threshold graph with 
$2n_2$ vertices $C_*^2=\{c_1^2,c_2^2,\ldots,c_{n_2}^2\}$, $I_*^2=\{i_1^2,i_2^2,\ldots,i_{n_2}^2\}$ 
so that $N(i_j^2)=\{c_1^2,\ldots,c_j^2\}$ for each $j$.

In the graph obtained in this way, condition (3) is satisfied but conditions (1) and (2) are not.
To make the new graph a split permutation graph we add some edges between $I_*^1$ and $C_*^2$ (to make both 
of them vicinal chains) and between $I_*^2$ and $C_*^1$ (also to make these sets vicinal chains).
For $I_*^1$ and $C_*^2$ this can be done as follows: whenever there is an edge $i_k^1c_j^2$ between   
$i_k^1\in I_*^1$ and $c_j^2\in C_*^2$, we add all missing edges between $i_k^1$ and any vertex $c_s^2\in C_*^2$
with $s<j$, and between $c_j^2$ and any vertex $i_s^1\in I_*^1$ with $s>k$. For $I_*^2$ and $C_*^1$, the addition 
of edges can be done by analogy. Since in the original graph the old vertices of each of $I_*^1$, $C_*^2$,
$I_*^2$ and $C_*^1$ form a vicinal chain, no edge has been added between any two old vertices. Therefore, 
the resulting graph contains $G$ as an induced subgraph. 
\end{proof}

\medskip 
Now we establish a correspondence between symmetric split permutation graphs and $321$-avoiding permutations.
More precisely, we deal with {\it labelled} $321$-avoiding permutations and with {\it labelled} symmetric split permutation graphs. 

By a labelled $321$-avoiding permutation we mean a permutation each element of which is assigned one of the two labels 1 or 2
so that the set of $1$-labelled elements forms an increasing sequence and  the set of $2$-labelled elements forms an increasing sequence.
Notice that each $321$-avoiding permutation $\pi$ admits at least two possible labellings, while the total number of labellings is $2^c$, 
where $c$ is the number of connected components of the permutation graph of $\pi$.   
We denote the set of $1$-labelled elements of a permutation $\pi$ by $\pi^{(1)}$ and the set of $2$-labelled elements by $\pi^{(2)}$.
In other words, we assume that a permutation is labelled if it is given {\it together} with a partition $(\pi^{(1)},\pi^{(2)})$
of its elements into two increasing sequences.

By a labelled symmetric split permutation graph we mean a graph given {\it together} with a partition of
its vertices into four sets $C^1,C^2,I^1,I^2$ satisfying (1), (2) and (3).

We also adapt the notion of containment relation (pattern containment and induced subgraph containment)
to the case of labelled objects (permutations and graphs) in a natural way, i.e. elements of one object belonging to the same set
in the given partition must be mapped to elements belonging to the corresponding set in the partition of the other object.   

\begin{lemma}\label{lem:bijection}
There is a one-to-one correspondence $\phi$ between labelled $321$-avoiding permutations and labelled symmetric split permutation graphs.
Moreover, a $321$-avoiding permutation $\pi_1$ contains a $321$-avoiding permutation $\pi_2$ as a pattern if and only if
$\phi(\pi_1)$ contains $\phi(\pi_2)$ as an induced subgraph.   
\end{lemma}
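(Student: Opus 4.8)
The plan is to construct the correspondence $\phi$ explicitly from a labelled $321$-avoiding permutation $\pi$ of length $n$ and to check that it is a bijection whose inverse is equally explicit, so that the containment equivalence becomes essentially a verification that $\phi$ and $\phi^{-1}$ respect the deletion of elements. First I would describe $\phi$. Given $\pi$ with its partition $(\pi^{(1)},\pi^{(2)})$ into two increasing subsequences, I think of each element of $\pi$ as a segment in the intersection diagram, so $G_\pi$ is a permutation graph. The two increasing subsequences $\pi^{(1)}$ and $\pi^{(2)}$ are pairwise non-crossing internally, hence each induces an independent set, say of sizes $a$ and $b$ with $a+b=n$. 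To get a \emph{split} graph rather than a bipartite one I would use the standard trick of complementing within one of the two classes: take $\pi^{(1)}$ to supply the independent set $I=I^1\cup I^2$ and $\pi^{(2)}$ (suitably re-read) to supply the clique $C=C^1\cup C^2$. Concretely, I would set $I^1=\pi^{(1)}$, $C^1=\pi^{(2)}$ in one half and then mirror this to produce $I^2$ and $C^2$; the adjacency between an $I$-vertex and a $C$-vertex is exactly the crossing relation between the corresponding segments, and adjacency inside $C$ is declared complete. The point of conditions (1), (2) and (3) is that this is precisely what makes the result a labelled symmetric split permutation graph: the threshold structure $N(i_j)=\{c_1,\dots,c_j\}$ inside each half corresponds to the linear order of the increasing subsequence, which is why the universal-threshold condition (3) matches the two increasing subsequences of $\pi$.

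The key steps in order are: (a) verify $\phi(\pi)$ is well defined, i.e. that the crossing pattern of two increasing subsequences yields a graph in which $C^1\cup I^1$ and $C^2\cup I^2$ are universal threshold graphs (this is just the observation that within an increasing subsequence the segments are nested/ordered, giving the chain $N(i_j)=\{c_1,\dots,c_j\}$); (b) verify conditions (1) and (2) hold, which follows because the full permutation is $321$-avoiding, so each of $C$ and $I$ splits into two vicinal chains by the Dilworth-number-$2$ characterisation \cite{BHde85}; (c) construct the inverse map by reading a labelled symmetric split permutation graph back as a diagram: the partition $(C^1,I^1)$ and $(C^2,I^2)$ gives two threshold pieces, each of which is the permutation graph of an increasing run, and re-interleaving them according to the adjacencies recovers a unique labelled $321$-avoiding permutation. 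I would then check $\phi^{-1}\circ\phi=\mathrm{id}$ and $\phi\circ\phi^{-1}=\mathrm{id}$ directly on the combinatorial data, which is routine once the labelling bookkeeping is fixed.

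For the second assertion I would argue both directions through the intersection-diagram picture. If $\pi_2$ is a pattern of $\pi_1$ \emph{as labelled permutations}, then by the definition of labelled containment adopted just before the lemma, the subsequence realising $\pi_2$ keeps each element's label, hence its segments form a sub-diagram whose induced crossing graph is exactly $\phi(\pi_2)$ sitting inside $\phi(\pi_1)$ with matching partition classes; conversely, a label-respecting induced subgraph of $\phi(\pi_1)$ isomorphic to $\phi(\pi_2)$ selects a set of segments whose one-line reading is a label-respecting subsequence order-isomorphic to $\pi_2$. The crux is that both $\phi$ and the deletion operation commute: deleting an element of $\pi$ corresponds precisely to deleting the matching vertex of $\phi(\pi)$, and this is true \emph{with labels} because $\phi$ is defined cell-by-cell from the segment data. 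The main obstacle I expect is bookkeeping the labels correctly through the split (rather than bipartite) interpretation: one must be careful that complementing the clique side does not disturb the order-isomorphism, i.e. that ``adjacent in the clique'' never interferes with detecting a pattern, since inside $C$ all vertices are adjacent in both $\phi(\pi_1)$ and $\phi(\pi_2)$ and carry no information beyond their membership in $C^1$ versus $C^2$. Once it is checked that the only informative adjacencies are the $I$--$C$ crossings, which mirror crossings in the diagram exactly, the equivalence follows.
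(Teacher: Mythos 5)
Your construction of $\phi$ is not the right one, and the gap is structural rather than a matter of missing detail. You send each element of $\pi$ to a \emph{single} vertex, using its label to decide whether it lands in the independent set ($\pi^{(1)}$) or the clique ($\pi^{(2)}$); this is the familiar way of turning a labelled $321$-avoiding permutation into a split permutation graph on $n$ vertices, but it cannot be a bijection with labelled \emph{symmetric} split permutation graphs. Condition (3) requires each half $G[C^j\cup I^j]$ to be a \emph{universal} threshold graph, which forces $|C^j|=|I^j|$ with all $|I^j|$ neighbourhoods distinct and nonempty; your graph has $|I|=|\pi^{(1)}|$ and $|C|=|\pi^{(2)}|$, which are unequal in general (e.g.\ $\pi=12$ with $\pi^{(1)}=\{1,2\}$, $\pi^{(2)}=\emptyset$ gives a two-vertex edgeless graph, nowhere near a symmetric split permutation graph). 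The ``mirror this to produce $I^2$ and $C^2$'' step, which seems intended to repair the count, is never defined and also contradicts your earlier statement that all of $I$ comes from $\pi^{(1)}$. The paper's map instead \emph{doubles} each element: $p_j$ produces both a clique vertex $c_{p_j}$ and an independent vertex $i_{p_j}$, with the label of $p_j$ deciding whether the pair goes into $(C^1,I^1)$ or $(C^2,I^2)$, and with $N(i_{p_j})=\{c_1,\dots,c_{p_j}\}$ when $p_j\in\pi^{(1)}$ and $N(i_{p_j})=\{c_{p_1},\dots,c_{p_j}\}$ when $p_j\in\pi^{(2)}$. This is exactly what makes both halves universal threshold graphs (hence condition (3)), makes the target have $2n$ vertices, and makes the map invertible by reading off the degree sequence of $I^1$ to recover $\pi^{(1)}$ and the positions of $I^2$-degrees to recover where $\pi^{(2)}$ sits.

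Because the construction is wrong, the second half of your argument does not go through either: you argue via the intersection diagram of $\pi$ and the crossing relation, but $\phi(\pi)$ in the lemma is not the permutation graph of $\pi$ (it is a split graph on twice as many vertices), so ``the only informative adjacencies are the $I$--$C$ crossings, which mirror crossings in the diagram exactly'' is not available to you. In the paper's setup the containment equivalence comes instead from the fact that deleting an element $p_j$ of $\pi$ corresponds to deleting the vertex pair $(c_{p_j},i_{p_j})$, and the neighbourhood formulas above encode the value $p_j$ and the position $j$ in a way that is preserved under taking label-respecting induced subgraphs. If you want to keep a diagram-based intuition, you would first have to prove that your $n$-vertex graph and the paper's $2n$-vertex graph determine each other, which is essentially proving the lemma by another route; as written, your proposal does not establish the stated bijection.
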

  
\begin{proof}  
Suppose we have a labelled $321$-avoiding permutation $\pi=p_1\ldots p_n$ together with a partition $(\pi^{(1)},\pi^{(2)})$.  
We associate a graph $G$ with $2n$ vertices $C=\{c_1,\ldots,c_n\}$ and 
$I=\{i_1,\ldots,i_n\}$, where 
\begin{itemize}
\item $C=C^1\cup C^2$ with $C^1=\{c_{p_j}\ :\ p_j\in \pi^{(1)}\}$ and 
$C^2=\{c_{p_j}\ :\ p_j\in \pi^{(2)}\}$,  
\item $I=I^1\cup I^2$ with $I^1=\{i_{p_j}\ :\ p_j\in \pi^{(1)}\}$ and 
$I^2=\{i_{p_j}\ :\ p_j\in \pi^{(2)}\}$.
\end{itemize}
We define the set $C$ to be a clique and the set $I$ an independent set, i.e. $G$ is a split graph. 
The set of edges between $C$ and $I$
is defined by describing the neighbourhoods of the vertices of  $I$ as follows:
\begin{itemize}
\item If $p_j\in \pi^{(1)}$, then $N(i_{p_j})=\{c_1,c_2,\ldots,c_{p_j}\}$,
\item if $p_j\in \pi^{(2)}$, then $N(i_{p_j})=\{c_{p_1},c_{p_2},\ldots,c_{p_j}\}$. 
\end{itemize}
By construction, $I^1$ and $I^2$ form vicinal chains.  
Also, from the description it follows that 
\begin{itemize}
\item for any $p_j,p_k\in \pi^{(1)}$ with $p_j<p_k$ we have $N(c_{p_j})\supset N(c_{p_k})$
and therefore $C^1$ forms a vicinal chain, 
\item for any $p_j,p_k\in \pi^{(2)}$ with $p_j<p_k$ we have $N(c_{p_j})\supset N(c_{p_k})$
and therefore $C^2$ forms a vicinal chain.
\end{itemize}
Therefore, $G$ is a split permutation graph. Also, it is not difficult to see that 
\begin{itemize}
\item $|C^1|=|I^1|$ with $N(i_{p_j})\cap C^1=\{c_{p_1},\ldots,c_{p_j}\}\cap \pi^{(1)}$
for each vertex $i_{p_j}\in I^1$,
\item $|C^2|=|I^2|$ with $N(i_{p_j})\cap C^2=\{c_{p_1},\ldots,c_{p_j}\}\cap \pi^{(2)}$
for each vertex $i_{p_j}\in I^2$.
\end{itemize}
Therefore, $G$ is a symmetric split permutation graph. The sets $C^1,C^2,I^1,I^2$ define a labelling of this graph.

The above procedure defines a mapping $\phi$ from the set of labelled 321-avoiding permutations of length $n$ 
to the set of labelled symmetric split permutation graphs with $2n$ vertices. It is not difficult to see 
that different permutations are mapped to different graphs and hence the mapping is injective. 
Now let us show that this mapping is surjective. 

Let $G=(C,I,E)$ be a symmetric split permutation graph with $2n$ vertices given together with 
a partition $C=C^1\cup C^2$ and $I=I^1\cup I^2$ satisfying (1), (2) and (3). Since $G$ is symmetric,
for any two vertices $i,j\in I^1$ we have $|N(i)|\ne |N(j)|$. Similarly, for any two vertices $i,j\in I^2$ 
we have $|N(i)|\ne |N(j)|$. Moreover, the degree of each vertex of $I=I^1\cup I^2$ is a number between 1 and $n$.
Now, define $\pi^{(1)}:=\{|N(i)|\ :\ i\in I^1\}$ and $\pi^{(2)}:=\{1,2, \ldots, n\} \backslash \pi^{(1)}$ and 
let $\pi$ be the permutation obtained by placing the elements of $\pi^{(2)}$ in the positions  $\{|N(i)|\ :\ i\in I^2\}$ 
in the increasing order, and by placing the elements of $\pi^{(1)}$ in the remaining positions also increasingly. 
It is not hard to see that the labelled permutation $\pi$ with the partition $(\pi^{(1)}, \pi^{(2)})$ is mapped to the graph $G$, 
and thus, $\phi$ is a surjection. 

\medskip 

In conclusion, $\phi$ is a bijection between labelled 321-avoiding permutations of length $n$ and 
labelled symmetric split permutation graphs with $2n$ vertices. Moreover, this bijection not only maps a permutation 
$\pi$ to a graph $G=(C,I,E)$, it also maps bijectively the elements of $\pi$ to $C$ and to $I$ as described in the above procedure. 
This mapping from the elements of the permutation to the vertices of the graph proves the second part of the lemma. 
\end{proof}  

\begin{theorem}
There is a split permutation graph with $4n^3$ vertices containing all split permutation graphs with $n$
vertices as induced subgraphs. 
\end{theorem}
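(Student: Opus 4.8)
The plan is to chain together the three results already in hand---Lemma~\ref{lem:2n}, Lemma~\ref{lem:bijection}, and Theorem~\ref{theorem}---so that the only genuinely new work is the bookkeeping of the labels. First I would reduce from arbitrary split permutation graphs to symmetric ones: by Lemma~\ref{lem:2n} every split permutation graph on $n$ vertices is an induced subgraph of a symmetric split permutation graph on $2n$ vertices, so it suffices to produce a single split permutation graph $U$ that contains all symmetric split permutation graphs with $2n$ vertices as induced subgraphs. Via the bijection $\phi$ of Lemma~\ref{lem:bijection}, every such graph is $\phi(\pi)$ for some labelled $321$-avoiding permutation $\pi$ of length $n$, and labelled pattern containment corresponds to labelled induced-subgraph containment (which in particular implies unlabelled containment). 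Hence it is enough to construct a labelled $321$-avoiding permutation $\Pi$ containing every labelled $321$-avoiding permutation of length $n$ as a labelled pattern, and then set $U=\phi(\Pi)$, a graph on $2|\Pi|$ vertices.

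So I want a labelled-universal $\Pi$. The subtlety is that a labelling is exactly a proper $2$-colouring of the (bipartite) permutation graph, so a single labelling of $\rho_n$ forces a single colouring on any embedded pattern, whereas a target permutation may demand any of its colourings. To get around this I would exploit the direct-sum structure: a $321$-avoiding permutation decomposes uniquely as a direct sum $\beta_1\oplus\cdots\oplus\beta_c$ of plus-indecomposable blocks with $c\le n$, the blocks being precisely the connected components of its permutation graph; each connected block admits exactly two labellings, interchanged by swapping the two colours. I would therefore take $\Pi$ to be the direct sum of $2n$ copies of $\rho_n$, labelling the copies in alternating order by a fixed labelling of $\rho_n$ and by its colour-swapped labelling. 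A direct sum of $321$-avoiders is again $321$-avoiding, and since no inversions occur across summands the two colour classes of $\Pi$ remain increasing, so $\Pi$ is a legitimately labelled $321$-avoiding permutation of length $2n\cdot n^2=2n^3$.

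To embed a target labelled $\pi=\beta_1\oplus\cdots\oplus\beta_c$, I would place the blocks into copies of $\rho_n$ from left to right, respecting the direct-sum layout: each block $\beta_i$ embeds into a single copy by the universality of $\rho_n$ (Theorem~\ref{theorem}), and because the copies are themselves direct-summed in increasing order, choosing copy indices $t_1<t_2<\cdots<t_c$ preserves the relative order of the blocks. The required colouring of the connected block $\beta_i$ is obtained by routing it to a copy of the matching orientation---one orientation induces a given colouring on $\beta_i$ and the swapped orientation induces the other, these being its only two colourings---and since orientations alternate, a greedy left-to-right choice advances by at most two copies per block, so $2n$ copies always suffice. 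This yields $|\Pi|=2n^3$, and hence the symmetric split permutation graph $U=\phi(\Pi)$ has $4n^3$ vertices and contains every split permutation graph on $n$ vertices, as required.

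The step I expect to be the main obstacle, and would scrutinise most, is precisely this colour-matching: checking that the one fixed labelling of $\Pi$ realises every one of the $2^c$ labellings of every target permutation. The crux rests on two facts---that the connected components of a $321$-avoiding permutation graph coincide with its direct-sum blocks, so colouring choices are independent across blocks and can be aligned with the direct-sum layout of $\Pi$, and that each connected block is $2$-colourable in exactly two ways, so the fixed labelling of $\rho_n$ together with its swap exhausts the possibilities block by block (the swap of all labels of $\rho_n$ swaps the induced labels on any embedded subgraph, so no further assumptions on $\rho_n$ are needed). Everything else---that direct sums preserve both $321$-avoidance and legality of the labelling, and that the per-block embeddings concatenate into a single labelled embedding---is routine.
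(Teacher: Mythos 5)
Your proposal is correct and follows essentially the same route as the paper: the direct sum of $2n$ copies of $\rho_n$ with alternating labellings is exactly the paper's permutation $u_n=(\rho_{n,1}\rho_{n,2})^n$, and the reduction chain (Lemma~\ref{lem:2n}, then the bijection $\phi$ of Lemma~\ref{lem:bijection}, then universality of $\rho_n$ from Theorem~\ref{theorem}) is identical, giving the same count of $2n^3$ elements and $4n^3$ vertices. The only difference is that you spell out the component-by-component colour-matching argument that the paper leaves implicit.
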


\begin{proof}
First, let us define the notion of concatenation of two permutations of the same length.
Given two permutations $\pi_1$ and $\pi_2$ of length $n$, the concatenation $\pi_1\pi_2$ is 
the permutation of length $2n$ obtained by placing the elements of $\pi_2$ to the right of 
$\pi_1$ and by increasing every element of $\pi_2$ by $n$. Also, by $\pi^n$ we denote the 
concatenation of $n$ copies of $\pi$.

Let $\rho_n$ be the $n$-universal $321$-avoiding permutation of Theorem~\ref{theorem}.
Since the permutation graph of $\rho_n$ is connected (see Figure~\ref{fig1}), there are exactly two ways to label 
the elements of $\rho_n$. We denote by $\rho_{n,1}$ and $\rho_{n,2}$ the two labellings of $\rho_n$.
Then the concatenation $\rho_{n,1}\rho_{n,2}$ is universal for labelled $321$-avoiding permutations
whose permutation graphs are connected. Since every  graph with $n$ vertices contains 
at most $n$ connected components, the permutation $u_n:=(\rho_{n,1}\rho_{n,2})^n$ is universal 
for the set of all labelled $321$-avoiding permutations of length $n$. Clearly, $u_n$ is 
$321$-avoiding and hence the graph $\phi(u_n)$ is a symmetric split permutation graph.

Since $u_n$ contains all labelled $321$-avoiding permutations of length $n$, by Lemma~\ref{lem:bijection}
$\phi(u_n)$ contains all labelled symmetric split permutation
graphs with $2n$ vertices. Therefore, by Lemma~\ref{lem:2n}, $\phi(u_n)$
is an $n$-universal split permutation graph.  
Since the length of $\rho_n$ is $n^2$,
the length of $u_n$ is $2n^3$, and hence the number of vertices of $\phi(u_n)$ is $4n^3$.  
\end{proof}

\section{Further research}
\label{sec:con}

One natural question to ask is whether the constructions of universal permutations and universal 
graphs obtained in this paper are best possible in terms of their sizes. As a matter of fact, 
for $n\geq 2$, the permutation $\rho_n$ constructed in Section~\ref{sec:per} does not seem to be 
a shortest possible proper $n$-universal $321$-avoiding permutation. For example, it seems that we can always 
delete a few rightmost elements in this permutation without violating the property of being $n$-universal. 
For the small values of $n$, the optimal permutations with respect to deleting rightmost elements in $\rho_n$ are as follows:
$$
\begin{array}{l|l}
n & \mbox{shortest universal permutation obtained from } \rho_n \mbox{ by deleting rightmost elements}\\
\hline
1 & 1 \\
\hline
2 &  314\\
\hline
3 &  51729\\
\hline
4 & 6182(10)3(12)4(13)5(14)7\\
\hline
5 & 7192(11)3(13)4(15)5(17)6(19)8(21)(10)\\
\end{array}
$$

This observation leads naturally to the following question: how far can we shorten the permutation
$\rho_n$ (not necessarily from the right side) keeping the property of being $n$-universal.
In particular, asymptotically, can we do better than $n^2$ for the length assuming that we start with $\rho_n$ and 
remove some elements? Of course, a natural direction here is not to limit ourselves to $\rho_n$, 
but instead to find a proper $n$-universal $321$-avoiding permutation of the shortest possible length.

\medskip
To state one more open problem, let us observe that the split permutation graphs are precisely 
the permutation graphs of $(2143,3412)$-avoiding permutations. The trivial idea of constructing 
an $n$-universal permutation for this set by concatenating all permutations of length $n$ from  
the set immediately moves us outside of the set, as a forbidden pattern arises. This partly explains 
the difficulty of constructing {\it proper} universal split permutation graphs. Can the construction
obtained in this paper for split permutation graphs be used to construct universal $(2143,3412)$-avoiding permutations?
 
\medskip
Finally, as we mentioned in the introduction, finding a proper {\it infinite} universal split permutation graph with countably many 
vertices is a challenging research problem.

\section*{Acknowledgments}

The authors are grateful to the anonymous referee for many valuable comments that helped to improve significantly the presentation of our results.

\end{document}